    \definecolor{plum}  {rgb}{.4,0,.4}
    \definecolor{BrickRed} {rgb}{0.6,0,0}
	\definecolor{DarkBlue} {rgb}{0,0,0.6}
\def\ddefloop#1{\ifx\ddefloop#1\else\ddef{#1}\expandafter\ddefloop\fi}
\def\ddef#1{\expandafter\def\csname b#1\endcsname{\ensuremath{\boldsymbol{#1}}}}
\def\ddef#1{\expandafter\def\csname c#1\endcsname{\ensuremath{\mathcal{#1}}}}
\def\ddef#1{\expandafter\def\csname s#1\endcsname{\ensuremath{\mathsf{#1}}}}
\def\Reals{{\mathbb R}}
\def\Ex{{\mathbf E}} 
\newsavebox{\@brx}
\newcommand{\llangle}[1][]{\savebox{\@brx}{\(\m@th{#1\langle}\)}%
  \mathopen{\copy\@brx\kern-0.5\wd\@brx\usebox{\@brx}}}
\newcommand{\rrangle}[1][]{\savebox{\@brx}{\(\m@th{#1\rangle}\)}%
  \mathclose{\copy\@brx\kern-0.5\wd\@brx\usebox{\@brx}}}
\newenvironment{proof}{\paragraph*{Proof:}}{\hfill$\square$}
\newtheorem{theorem}{Theorem}
\newtheorem{definition}{Definition}
\newtheorem{example}{Example}
\newtheorem{remark}{Remark}
\begin{document}

\title{Revisiting Stochastic Realization Theory\\
using Functional It\^o Calculus}
\author{Tanya Veeravalli \\ \href{mailto:veerava2@illinois.edu}{veerava2@illinois.edu} \\ 
\and Maxim Raginsky \\ \href{mailto:maxim@illinois.edu}{maxim@illinois.edu} }
\date{}
\maketitle

\begin{abstract}                
This paper considers the problem of constructing finite-dimensional state space realizations for stochastic processes that can be represented as the outputs of a certain type of a causal system driven by a continuous semimartingale input process. The main assumption is that the output process is infinitely differentiable, where the notion of differentiability comes from the functional It\^o calculus introduced by Dupire as a causal (nonanticipative) counterpart to Malliavin's stochastic calculus of variations. The proposed approach builds on the ideas of Hijab, who had considered the case of processes driven by a Brownian motion, and makes contact with the realization theory of deterministic systems based on formal power series and Chen--Fliess functional expansions.
\end{abstract}


\section{Introduction}

The problem of (strong) stochastic realization can be stated abstractly as follows \citep{Willems_1980}: Let a probability space $(\Omega,\cF,\bP)$ be given, along with two random variables (measurable mappings) $Y_1 : \Omega \to \sY_1$ and $Y_2 : \Omega \to \sY_2$, where $(\sY_i,\cY_i)$, $i = 1,2$,  are given measurable spaces. The objective is to construct a measurable space $(\sX,\cX)$ and a measurable mapping $X : \Omega \to \sX$, such that $Y_1$ and $Y_2$ are conditionally independent given $X$. (We say that $X$ \textit{splits} $Y_1$ and $Y_2$.) Here, $Y_1$ and $Y_2$ are interpreted as external (or manifest) \textit{output variables} associated with a stochastic system, and $X$ is an internal (or latent) \textit{state variable} that ``explains'' the correlations between $Y_1$ and $Y_2$. One can  specify various restrictions on $X$, such as minimality [i.e., if $X' : \Omega \to \sX'$ is another random variable that splits $Y_1$ and $Y_2$, then there exists a measurable map $f : \sX' \to \sX$ such that $X = f(X')$]. As detailed by \citet{Willems_1980}, many of the salient features of the stochastic realization problem already appear in this stripped-down formulation.

Our interest here is in the dynamical setting, where $Y_1$ and $Y_2$ appear, respectively, as the (strict) past and the future of a given stochastic \textit{process} $Y = (Y(t))_{t \in \sT}$,  $\sT \subseteq \Reals$. That is, $X = (X(t))_{t \in \sT}$ is another process defined on the same probability space, such that, for each $t$, $X(t)$ splits $((X(s),Y(s))_{s \in (-\infty,t) \cap \sT}$ and $((X(s),Y(s))_{s \in [t,+\infty) \cap \sT}$. In this case, $X$ is a (Markov) state process and $Y$ is the output process of a stochastic system, and we say that the pair $(X,Y)$ is a \textit{state space realization} of $Y$. Under fairly minimal regularity assumptions on $Y$ and on the underlying probability space, one can always produce a state space realization with $X$ given by the so-called \textit{prediction process} of $Y$ in the sense of \citet{Knight_1975}; cf.~\citet{Taylor_1988,Taylor_1989} for a related construction. While the resulting state process has many desirable properties (e.g., it is a strong Markov process that takes values in a compact metric space, and is minimal as described in the preceding paragraph), its generality poses considerable obstacles when it comes to applications.

Of particular interest in applications is the case when the state takes values in a \textit{finite-dimensional} vector space; this, together with the Markov property of the state process, allows for efficient computational implementations of various schemes for estimation or control. Hence, an important problem is to determine whether a given process admits a finite-dimensional state space realization and, if so, how one can go about constructing such a realization. When $Y = (Y(t))_{t \in \Reals}$ is a stationary Gaussian process taking values in $\Reals^p$, there is an elegant geometric approach to the problem of stochastic realization that makes contact with the realization theory of deterministic linear systems; cf.~the comprehensive text by \citet{Lindquist_2015}.

By contrast, there are relatively few results on nonlinear stochastic realization theory for non-Gaussian processes. For example, \citet{Lindquist_1982} use Wiener's homogeneous chaos theory \citep{Stroock_1987} to construct state space realizations for stationary processes that have an innovation representation in terms of a Brownian motion process, but the resulting state processes are, in general, infinite-dimensional. To the best of our knowledge, the first steps toward a systematic theory of finite-dimensional nonlinear stochastic realization were taken by \citet{Hijab_1983,Hijab_1983_CDC}. In particular, by representing the process $Y$ as a smooth causal functional of a Brownian motion (in a sense that will be made precise below), Hijab was able to bring Lie-algebraic techniques from the realization theory of nonlinear deterministic systems (see, for example, \citet[Ch.~3]{Isidori_1995}) to bear on the stochastic setting. (Here, we should also point out the use of Lie theory by \citet{Mitter_1979}, \citet{Brockett_1980}, \citet{Hazewinkel_1981}, and \citet{Sussmann_1981} in the context of finite-dimensional realizations of continuous-time nonlinear filters.)

In this paper, we revisit Hijab's approach and show that it can be extended beyond his original setting of processes driven by a Brownian motion to a much wider class of processes driven by sufficiently regular continuous semimartingales. This expands the scope of his approach to the case of processes that are themselves driven by outputs of other systems, e.g., by diffusion processes governed by It\^o stochastic differential equations. Moreover, we show that Hijab's concept of smoothness of a process can be made operationally precise using the machinery of \textit{functional It\^o calculus} introduced by \citet{Dupire2009FunctionalIC} in the context of mathematical finance and later developed by \cite{Cont_2010,Cont_2013}. In particular, functional It\^o calculus allows one to define nonanticipative (causal) derivatives\footnote{Dupire's definition is closely related to the notion of causal derivative due to \citet{Fliess_1983}.} of causal functionals of stochastic processes, and, as this paper will show, Hijab's notion of smoothness amounts to infinite differentiability in this sense.

\section{Causal stochastic systems}
\label{sec:stochastic_systems}

Our starting point will be the following definition \citep{Willems_1980}:

\begin{definition} A {\em stochastic system} (in output form) consists of the following objects:
	\begin{itemize}
		\item a probability space $(\Omega,\cF,\bP)$;
		\item a time index set $\sT \subseteq \Reals$;
		\item a measurable {\em output space} $(\sY,\cY)$;
		\item a stochastic process $Y : \sT \times \Omega \to \sY$.
	\end{itemize}
\end{definition}
We will take $\sT = [0,T]$ with $T > 0$ fixed and $(\sY,\cY) = (\Reals,\cB)$, where $\cB$ is the Borel $\sigma$-algebra of the open subsets of $\Reals$. This describes a single-output system. We stick to this setting for simplicity, although everything can be easily extended to the case of $p > 1$ outputs. Now, if we are to think of the above definition of a system in engineering terms, i.e., as a physically realizable operator that maps signals to signals, then it makes sense to let $\Omega$ itself be  a space of sufficiently regular trajectories defined on $[0,T]$. For reasons that will become clear in the sequel, we will take $\Omega = D([0,T],\Reals^m)$, the Skorohod space of c\`adl\`ag (right-continuous with left limits) paths $w : [0,T] \to \Reals^m$. The Skorohod space can be equipped with a metric $d$ that makes it a complete separable metric space \citep{Billingsley_book}, and we let $\cF$ be the corresponding Borel $\sigma$-algebra.

The next notion we need is that of a causal (nonanticipatory) system. Following \citet{Georgiou_2013}, by a system we understand a measurable map $F : D([0,T],\Reals^m) \to D([0,T],\Reals)$ that takes $m$-dimensional input trajectories $w$ to one-dimensional output trajectories $Fw$. Any such $F$ determines a family of mappings $F(t,\cdot) : D([0,T],\Reals^m) \to \Reals$, $0 \le t \le T$, by $F(t,w) := Fw(t)$. For a causal map, $F(t,\cdot)$ depends only on the restriction of $w$ to $[0,t]$. To make this precise, define for each $t \in [0,T]$ the map $\Pi_t : D([0,T],\Reals^\bullet) \to D([0,T],\Reals^\bullet)$ by
\begin{align*}
	\Pi_t w(s) := \begin{cases}
	w(s), & s < t \\
	w(t), & t \le s \le T
	\end{cases}.
\end{align*}
In other words, $\Pi_t$ maps a trajectory $w(\cdot)$ to the trajectory $w(\cdot \wedge t)$ stopped at time $t$. (Note that we are suppressing the dependence of $\Pi_t$ on the dimension of the signals since it will always be clear from context.)

\begin{definition} A measurable map $F : D([0,T],\Reals^m) \to D([0,T],\Reals)$ is a {\em causal system} if
	\begin{align*}
		\Pi_t \circ F \circ \Pi_t = \Pi_t \circ F, \qquad \text{for all } t \in [0,T].
	\end{align*}
\end{definition}

Next, let $W$ be the canonical coordinate process on $(\Omega,\cF)$, i.e., $W(t,w) = w(t)$, and take $\bP$ to be a probability measure on $(\Omega,\cF)$ under which $W$ is a continuous semimartingale \citep{Protter_2005} with $W(0) = 0$ and with a given quadratic variation process
\begin{align}\label{eq:W_qvar_1}
	[W](t) = \int^t_0 Q(s)\dif s, \qquad 0 \le t \le T
\end{align}
where $Q$ is a c\`adl\`ag process taking values in the space $\Reals^{m \times m}_+$ of $m \times m$ positive semidefinite matrices, such that
\begin{align}\label{eq:W_qvar_2}
	\bP\{\det Q(t) > 0 \text{ for all } 0 \le t \le T \} = 1.
\end{align}
For example, if $Q(t)=I_m$, the $m\times m$ identity matrix, then $\bP$ is the probability law of a standard $m$-dimensional Brownian motion on $[0,T]$. 

\begin{remark}\label{rem:support} {\em Owing to the continuity assumption on the sample paths of $W$, the support of $\bP$ is (a subset of) the space $C([0,T],\Reals^m)$ of continuous paths $w : [0,T] \to \Reals^m$, which is  a proper subset of $D([0,T],\Reals^m)$. Nevertheless, as we will see shortly, we will need the entire Skorohod space in order to construct perturbations of paths.} \end{remark}
	
We now impose the following causal realizability condition on the output process $Y$: There exists a causal system $F : D([0,T],\Reals^m) \to D([0,T],\Reals)$, such that
\begin{align}\label{eq:Y_causal}
	Y(t) = F(t,W), \qquad \text{for all } t \in [0,T]
\end{align}
---this is just a different way of saying that $Y$ is a progressively measurable process defined on the filtered probability space $(\Omega,\cF,(\cF_t)_{t \in [0,T]},\bP)$, where $(\cF_t)_{t \in [0,T]}$ is the natural filtration induced by $W$. In view of \Cref{rem:support}, the representation in \Cref{eq:Y_causal} is not unique because $W$ has continuous sample paths and we can modify $F$ arbitrarily outside the support of $\bP$ without affecting the output process $Y$. All we ask is that $Y$ has a version that admits such a representation. In fact, as we discuss next, we will restrict our attention to a smaller class of processes $Y$ for which the map $F$ in \eqref{eq:Y_causal} is \textit{smooth} in a certain sense.

\section{Functional It\^o calculus}
\label{sec:functional_Ito}

We will make use of the notions of differentiability of a causal system $F$ introduced by \citet{Dupire2009FunctionalIC} and  developed further by Cont and Fourni\'e (\citeyear{Cont_2010,Cont_2013}). We say that $F$ has a \textit{time} (or \textit{horizontal}) \textit{derivative} at $(t,w)$ if the limit
\begin{align}\label{eq:Dupire_h}
	\partial_0 F(t,w) := \lim_{h \to 0^+} \frac{F(t+h,\Pi_tw) - F(t, w)}{h}
\end{align}
exists, and a \textit{space} (or \textit{vertical}) \textit{derivative} at $(t,w)$ in the direction $e_i$ (the $i$th element of the standard basis in $\Reals^m$) if the limit
\begin{align}\label{eq:Dupire_v}
	\partial_i F(t,w) := \lim_{h \to 0^+} \frac{F(t, w+h1_{[t,T]}e_i)-F(t, w)}{h}
\end{align}
exists. The derivatives $\partial_i F$, $i = 0,\dots,m$, are themselves causal systems, and we can therefore define higher-order derivatives $\partial_i \partial_j F$ etc., provided they exist. It is important to note that the $\partial_i$'s do not commute in general. We will say $F$ is a \textit{smooth causal system} if it is continuous and has continuous derivatives of all orders in the sense of Dupire.

\begin{remark} {\em It is instructive to contrast the above definition of $\partial_i F(t,w)$ with Malliavin's definition of the (anticipative) derivative of $F$ w.r.t.\ $w_i$ at $(t,w)$:
	\begin{align}\label{eq:Malliavin}
		D_i F(t,w) := \lim_{h \to 0^+} \frac{F(T,w+h1_{[t,T]}e_i)-F(T,w)}{h}
	\end{align}
	---it is  the Fr\'echet derivative of $w \mapsto F(T,w)$ in the direction $1_{[t,T]}e_i$ \citep{Nualart_2006}. Unlike the Dupire derivative $\partial_i F(t,w)$ in \eqref{eq:Dupire_v},  the Malliavin derivative $D_iF(t,w)$ in \eqref{eq:Malliavin} is not causal as it depends on the entire path $w$, not just on $\Pi_tw$.}
\end{remark}

The above definitions form the basis of the so-called \textit{functional It\^o calculus}, which deals with causal functionals of sufficiently regular stochastic processes. Consider, in particular, a continuous semimartingale $W$ satisfying the conditions \eqref{eq:W_qvar_1} and \eqref{eq:W_qvar_2}. The `classical' version of It\^o's lemma says that, for a $C^{1,2}$ function $f : [0,T] \times \Reals^m \to \Reals$, the following holds $\bP$-a.s.\ for every $t \in [0,T]$:
\begin{align*}
	&f(t,W(t)) - f(0,W(0)) \\
	&= \int^t_0 \partial_0 f(s,W(s)) \dif s + \sum^m_{i=1}\int^t_0 \partial_i f(s,W(s))\dif W_i(s) \\
	& \qquad + \frac{1}{2}\sum^m_{i,j=1}\int^t_0 \partial_i \partial_j f(s,W(s)) Q_{ji}(s)\dif s,
\end{align*}
where $\partial_0 f(s,x) := \frac{\partial f}{\partial s} (s,x)$,
\begin{align*}
	\partial_i f(s,x) := \frac{\partial f}{\partial x_i} (s,x), \qquad i = 1,\dots,m
\end{align*} 
etc., and the stochastic integral is understood in the sense of It\^o. We can also rewrite it in Stratonovich form:
\begin{align}\label{eq:Ito_Stratonovich}
	\begin{split}
&	f(t,W(t))-f(0,W(0)) \\
&= \int^t_0 \partial_0 f(s,W(s)) \dif s + \sum^m_{i=1} \int^t_0 \partial_i f(s,W(s)) \circ \dif W_i(s).
\end{split}
\end{align}
The functional formulation extends this to causal functionals of $W$:

\begin{theorem}{\citep{Dupire2009FunctionalIC,Cont_2010}}\label{thm:functional_Ito} Let $F$ be continuous causal system $F$ with continuous first- and second-order derivatives $\partial_0 F, \dots, \partial_m F$ and $\partial_i \partial_j F$, $i,j = 1,\dots,m$. Let $W$ be a continuous semimartingale satisfying the conditions \eqref{eq:W_qvar_1} and \eqref{eq:W_qvar_2}. Then, for any $t \in [0,T]$, the following holds $\bP$-a.s.:
	\begin{align}\label{eq:functional_Ito}
		\begin{split}
		 &F(t,W) - F(0,W) \\
		 &= \int^t_0 \partial_0 F(s,W) \dif s  + \sum^m_{i=1}\int^t_0 \partial_i F(s,W) \dif W_i(s) \\
		 & \qquad + \frac{1}{2}\sum^m_{i,j=1} \int^t_0 \partial_i \partial_j F(s,W) Q_{ji}(s) \dif s.
		\end{split}
	\end{align}
\end{theorem}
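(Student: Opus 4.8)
The plan is to prove \eqref{eq:functional_Ito} by a discretization argument that mirrors the Riemann-sum proof of the classical It\^o formula, in the spirit of Dupire and of Cont and Fourni\'e. Fix $t \in [0,T]$ and a sequence of partitions $\pi_n = \{0 = t^n_0 < \dots < t^n_{N_n} = t\}$ whose mesh tends to $0$. The starting point is the telescoping identity $F(t,W) - F(0,W) = \sum_k [F(t^n_{k+1},W) - F(t^n_k,W)]$, in which each increment is split, by inserting the stopped path $\Pi_{t^n_k}W$, into a \emph{horizontal} part $F(t^n_{k+1},\Pi_{t^n_k}W) - F(t^n_k,\Pi_{t^n_k}W)$ and a \emph{vertical} part $F(t^n_{k+1},W) - F(t^n_{k+1},\Pi_{t^n_k}W)$; here I use causality to write $F(t^n_k,W) = F(t^n_k,\Pi_{t^n_k}W)$. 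Because $F$ and its derivatives are assumed continuous on stopped paths, I would first replace $W$ throughout by the step path $W^n$ that freezes $W$ on each $[t^n_k,t^n_{k+1})$ and agrees with $W$ at the partition points; this is exactly where the c\`adl\`ag ambient space $D([0,T],\Reals^m)$ is needed (cf.\ \Cref{rem:support}), since these approximants and the vertical perturbations $w + h1_{[t,T]}e_i$ leave the space of continuous paths.

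For the horizontal terms, the map $h \mapsto F(t^n_k + h, \Pi_{t^n_k}W)$ is differentiable with right derivative $\partial_0 F(t^n_k + h, \Pi_{t^n_k}W)$ by the very definition \eqref{eq:Dupire_h}, so the fundamental theorem of calculus gives $F(t^n_{k+1},\Pi_{t^n_k}W) - F(t^n_k,\Pi_{t^n_k}W) = \int_{t^n_k}^{t^n_{k+1}} \partial_0 F(s,\Pi_{t^n_k}W)\,\dif s$. Summing over $k$ and invoking continuity of $\partial_0 F$ to pass to the limit, these contributions converge to $\int_0^t \partial_0 F(s,W)\,\dif s$. For the vertical terms, the two path arguments differ only by the increment accrued on $(t^n_k, t^n_{k+1}]$, which along the step path is the shift $\Delta^n_k W := W(t^n_{k+1}) - W(t^n_k)$ inserted at $t^n_{k+1}$. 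A second-order Taylor expansion in this increment, using the vertical derivatives \eqref{eq:Dupire_v}, yields $\sum_i \partial_i F(t^n_{k+1},\Pi_{t^n_k}W)\,\Delta^n_k W_i + \tfrac12 \sum_{i,j} \partial_i\partial_j F(t^n_{k+1},\Pi_{t^n_k}W)\,\Delta^n_k W_i\,\Delta^n_k W_j$ plus a remainder. Sending the mesh to zero, the first-order sums should converge (uniformly on compacts in probability) to $\sum_i \int_0^t \partial_i F(s,W)\,\dif W_i(s)$, while the second-order sums, by the quadratic-variation structure \eqref{eq:W_qvar_1}, should converge to $\tfrac12 \sum_{i,j}\int_0^t \partial_i\partial_j F(s,W)\,\dif[W_i,W_j](s) = \tfrac12 \sum_{i,j}\int_0^t \partial_i\partial_j F(s,W)\,Q_{ji}(s)\,\dif s$.

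The main obstacle is this last passage to the limit, and it is cleanest to isolate the probabilistic input from the deterministic bookkeeping as Cont and Fourni\'e do: first establish a purely pathwise change-of-variable identity valid for every c\`adl\`ag path of finite quadratic variation along $(\pi_n)$, and only then invoke that a continuous semimartingale satisfying \eqref{eq:W_qvar_1}--\eqref{eq:W_qvar_2} has such quadratic variation $\bP$-a.s. Two points require genuine care. First, the Taylor remainder must be shown to vanish; this rests on uniform continuity of the $\partial_i\partial_j F$ over the (a.s.\ relatively compact) family of stopped paths $\{\Pi_s W : s \le t\}$, together with the boundedness of $\sum_k |\Delta^n_k W|^2$ and the vanishing of $\max_k |\Delta^n_k W|$. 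Second, one must justify that the first-order sums converge to the It\^o (rather than Stratonovich) integral: the vertical derivative appears with its \emph{path argument frozen at the left endpoint} $t^n_k$, hence is non-anticipating, and this is precisely what forces the It\^o convention; the interplay between this choice and the symmetric second-order term is what produces the correction in \eqref{eq:functional_Ito}. Note that the $\partial_i$ need not commute, but only the part of $\partial_i\partial_j F$ symmetric in $(i,j)$ survives the pairing with $\Delta^n_k W_i \Delta^n_k W_j$, matching the symmetric $Q_{ji}$. Once these estimates are in place, combining the horizontal and vertical limits gives \eqref{eq:functional_Ito}.
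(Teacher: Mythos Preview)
The paper does not actually prove this theorem: it is stated with attribution to \citet{Dupire2009FunctionalIC} and \citet{Cont_2010} and then immediately followed by remarks, with no proof environment supplied. So there is no ``paper's own proof'' to compare against.

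That said, your sketch is faithful to the argument in those cited sources. The telescoping decomposition into horizontal and vertical increments, the use of step-path approximants in $D([0,T],\Reals^m)$, the fundamental theorem of calculus for the horizontal part, the second-order vertical Taylor expansion, and the identification of the first-order sums as It\^o integrals via left-endpoint evaluation are exactly the ingredients of the Cont--Fourni\'e proof. Your observations about the role of the c\`adl\`ag ambient space and about only the symmetric part of $\partial_i\partial_j F$ contributing are correct and relevant. If anything, a fully rigorous write-up would also need to spell out the precise continuity and boundedness hypotheses (what Cont and Fourni\'e call $\mathbb{C}^{1,2}_b$) to control the remainder uniformly, and to argue localization in case the integrands are only locally bounded; but as a proof plan your proposal is sound and matches the literature the paper defers to.
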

\begin{remark} {\em Since $F$ and its derivatives are causal systems, the integrands in \eqref{eq:functional_Ito} can be equivalently written as $\partial_i F(s,\Pi_sW)$, $\partial_i\partial_j F(s,\Pi_sW)$, etc.}
\end{remark}
\begin{remark} {\em The formula \eqref{eq:functional_Ito} can be written in Stratonovich form as}
\begin{align*}
 &F(t,W) - F(0,W) \\
 &= \int^t_0 \partial_0 F(s,W) \dif s  + \sum^m_{i=1}\int^t_0 \partial_i F(s,W) \circ \dif W_i(s).
\end{align*}
\end{remark}
\begin{remark} {\em There is an extension of the above result to causal functionals of semimartingales with c\`adl\`ag sample paths, with an additional term accounting for the jumps of the input process, \citep{Cont_2010}. While we limit ourselves to continuous inputs here, this more general case would be relevant in the context of systems involving point processes.}
\end{remark}

\section{Dupire-differentiable causal stochastic systems}

We now turn our attention back to stochastic systems introduced in \Cref{sec:stochastic_systems} and, in particular, to real-valued processes $Y = (Y(t))_{t \in [0,T]}$ satisfying the  condition \eqref{eq:Y_causal} for some causal system $F$. At this point, though, we impose the additional requirement that $F$ is continuous and has continuous first- and second-order Dupire derivatives $\partial_0 F, \dots, \partial_m F$ and $\partial_i\partial_j F$ for $1 \le i,j \le m$. Here, some care must be exercised in light of the non-uniqueness issue mentioned at the end of \Cref{sec:stochastic_systems}: While the process $Y$ does not depend on the behavior of $F$ outside the support of $\bP$, the Dupire derivatives of $F$ do depend on it (indeed, the definition of the vertical derivative involves c\`adl\`ag perturbations of input trajectories). Fortunately, our nondegeneracy assumption on the quadratic variation process of $W$ allows us to define the Dupire derivatives of $Y$ \textit{intrinsically} \citep{Cont_2013}, as follows.

Let $Y$ be a continuous progressively measurable process on the filtered probability space $(\Omega,\cF,(\cF_t),\bP)$; cf.~the discussion at the end of \Cref{sec:stochastic_systems}. We say that $Y$ is \textit{Dupire-differentiable} if there exist progressively measurable processes $Z_0,\dots,Z_m$  on the same probability space, such that
\begin{align*}
	\int^t_0 |Z_0(s)| \dif s + \sum^m_{i,j=1} \int^t_0 Z_i(s)Z_j(s)Q_{ij}(s) \dif s < \infty
\end{align*}
and
\begin{align}\label{eq:Dupire_diff_Y}
	Y(t) = Y(0) + \int^t_0 Z_0(s) \dif s + \sum^m_{i=1} \int^t_0 Z_i(s) \circ \dif W_i(s)
\end{align}
for all $t \in [0,T]$ $\bP$-a.s. We need to show that if $Y \equiv 0$ $\bP$-a.s., then all the $Z_i \equiv 0$  $\bP$-a.s.~as well. By  It\^o's lemma,
\begin{align*}
	0 = |Y(T)|^2 = \sum^m_{i,j=1} \int^T_0  Z_i(t) Z_j(t) Q_{ij}(t)  \dif t.
\end{align*}
Since $Q(\cdot)$ is a.s.\ positive definite, it follows that
\begin{align*}
	Z_i = 0, \, i = 1,\dots,m \qquad \text{a.s.}
\end{align*}
and therefore $Z_0 = 0$ a.s.\ as well. Hence, if $Y$ is Dupire-differentiable, then the processes $Z_0,\dots,Z_m$ are a.s.~uniquely determined. Moreover, if there exists a causal map $F$ satisfying the Dupire differentiability conditions listed at the beginning of this section and such that $Y = F(W)$, then it follows from the functional It\^o's lemma (\Cref{thm:functional_Ito}) that $Y$ is Dupire-differentiable and
\begin{align*}
	\bP \{ Z_i(t) = \partial_i F(t,W), i = 0, \dots, m \text{ for all } 0 \le t \le T \} = 1.
\end{align*}
In addition, the above almost sure uniqueness argument shows that, if $Y$ has an alternative representation as $\tilde{F}(W)$ with $\tilde{F} \neq F$, then the Dupire derivatives of $F$ and $\tilde{F}$ computed along the paths of $W$ are almost surely equal. Taking all of this into account, we can introduce the linear mappings $S_0,\dots,S_m$ that take any Dupire-differentiable process $Y$ to the respective processes $Z_0,\dots,Z_m$ in \eqref{eq:Dupire_diff_Y}.

\begin{definition} We say that $Y$ is {\em continuously Dupire-differentiable} if $S_0Y, \dots, S_mY$ are continuous. For $k \ge 1$, we say that $Y$ is $(k+1)$-{\em times continuously Dupire-differentiable} if it is Dupire-differentiable and $S_0Y,\dots,S_mY$ are $k$-times continuously Dupire-differentiable. Finally, if $Y$ is $k$-times continuously Dupire-differentiable for all $k \ge 0$, then we say that it is \textit{Dupire-smooth}.
\end{definition}

We introduce the following notation and definitions for later use: Let $\sM$ denote the set of all finite tuples, or words, $\bi = (i_1,\dots,i_k)$ with $i_j \in \{0,\dots,m\}$ for $k \ge 0$ ($k = 0$ corresponds to the empty word $\diamond$). For each $\bi  = (i_1,\dots,i_k)\in \sM$ we define the iterated operators
\begin{align}\label{eq:S_word}
	S_{\bi} := S_{i_k} S_{i_{k-1}} \dots S_{i_1}
\end{align}
with $S_\diamond := {\rm id}$, as well as the iterated Stratonovich integrals
\begin{align}\label{eq:I_word}
&	\int^t_{0} \circ \dif W_{\bi} \nonumber\\
&:= \int_{\Delta^k[0,t]} \circ \dif W_{i_k}(t_k) \circ \dots \circ \dif W_{i_2}(t_2) \circ \dif W_{i_1}(t_1)
\end{align}
with $\dif W_0(t) := \dif t$ and $\int^t_0 \circ \dif W_\diamond := 1$, where the integration is over the $k$-dimensional simplex
\begin{align*}
	\Delta^k[0,t] := \big\{ (t_1,\dots,t_k) \in [0,t]^k: t_k \le \dots  \le t_2 \le t_1 \big\}.
\end{align*}
Finally, we define the linear operator $c$ that takes the process $Y$ to its initial value $Y(0)$, which is a deterministic constant since $W(0) = 0$.

\subsection{Examples}

\begin{example} {\em For the `memoryless' system $Y(t) = f(t,W(t))$ with $f$ of class $C^{1,2}$, we simply recover the It\^o--Stratonovich formula \eqref{eq:Ito_Stratonovich}:
	\begin{align*}
		S_i Y(t) = \partial_i f(t,W(t)), \qquad i = 0,\dots,m.
	\end{align*}}
\end{example}

\begin{example}\label{ex:linear_filter} {\em Consider the process $Y$ obtained by passing $W$ through a linear filter:
	\begin{align*}
		Y(t) = \sum^m_{i=1}\int^t_0 h_i(t-s)\dif W_i(s), \qquad 0 \le t \le T
	\end{align*}
	where the $h_i$'s are smooth (analytic or $C^\infty$) functions $[0,T] \to \Reals$. Then, for $k \ge 0$,
	\begin{align*}
		S^k_0 Y(t) = \sum^m_{i=1}\int^t_0 \frac{\partial^kh_i}{\partial t^k}(t-s)\dif W_i(s)
	\end{align*}
	and
	\begin{align*}
		S_i S^k_0 Y(t) = h^{(k)}_i(0), \qquad i = 1,\dots,m
	\end{align*}
	corresponding to the words
	\begin{align*}
		(\underbrace{0,\dots,0}_{\text{$k$ times}}) \text{ and }
		(\underbrace{0,\dots,0}_{\text{$k$ times}},i), \qquad i \in \{1,\dots,m\}
	\end{align*}
respectively. All other $S_{\bi}Y$ are equal to zero.} \end{example}

\begin{example}\label{ex:realization} {\em Here we consider a particular instance of a finite-dimensional state space realization. Let the following objects be given: a nonrandom point $x_0 \in \Reals^n$, $m+1$ smooth vector fields $g_0,\dots,g_m : \Reals^n \to \Reals^n$, and a smooth function $h : \Reals^n \to \Reals$. We assume that the solution of the Stratonovich integral equation
	\begin{align*}
		X(t) = x_0 + \int^t_0 g_0(X(s))\dif s + \sum^m_{i=1} \int^t_0 g_i(X(s)) \circ \dif W_i(s)
	\end{align*}
exists for all $t \in [0,T]$, and take $Y(t) = h(X(t))$. Then it follows readily from the It\^o--Stratonovich formula that the processes $S_iY(t)$ are given by the Lie derivatives of $h$ along the vector fields $g_i$:
\begin{align*}
	S_iY(t) &= L_{g_i}h(X(t)) \\
	&:= \sum^n_{j=1} \frac{\partial h}{\partial x_j}(X(t)) g_{i,j}(X(t)).
\end{align*}
Since $g_0,\dots,g_m$ and $h$ are smooth, the process $Y$ is Dupire-smooth, and the action of $S_{\bi}$ for $\bi = (i_1,\dots,i_k) \in \sM$ corresponds to taking iterated Lie derivatives: 
\begin{align*}
	S_{\bi}Y(t) = L_{g_{i_k}}L_{g_{i_{k-1}}} \dots L_{g_{i_1}}h(X(t)).
\end{align*}
The pair $(X,Y)$ is evidently a state space realization of $Y$.}
\end{example}

\begin{example}\label{ex:filtering} {\em Our final example concerns nonlinear filtering, which was the impetus for some of the original investigations of nonlinear stochastic realization theory. Let $Z = (Z(t))_{t \in [0,T]}$ be a c\`adl\`ag process  taking values in a Polish space $\sS$, and let $V = (V(t))_{t \in [0,T]}$ be a standard one-dimensional Brownian motion process independent of $Z$. Consider the one-dimensional observation process
	\begin{align*}
		W(t) = \int^t_0 h(Z(s))\dif s + V(t), \qquad 0 \le t \le T
	\end{align*}
	where $h : \sS \to \Reals$ is a measurable \textit{observation function}. The objective of nonlinear filtering is to compute the conditional expectations $\pi_t(\varphi) := \Ex[\varphi(Z(t))|\cF_t]$ of a given sufficiently `nice' measurable function $\varphi : \sS \to \Reals$. It is convenient to introduce the so-called \textit{unnormalized filter} $\sigma_t(\varphi)$ governed by the \textit{Zakai equation}, which in Stratonovich form is given by
	\begin{align}\label{eq:Zakai}
		\begin{split}
		\sigma_t(\varphi) = \sigma_0(\varphi) &+ \int^t_0 \Big(\sigma_s(\cA \varphi) - \frac{1}{2}\sigma_s(h^2\varphi) \Big)\dif s \\
		& + \int^t_0 \sigma_s(h \varphi) \circ \dif W(s).
		\end{split}
	\end{align}
	Here, $\sigma_0(\varphi) = \Ex[\varphi(Z(0))]$ and $\cA$ is the infinitesimal generator of $Z$. It can then be shown that $\pi_t(\cdot)$ can be expressed in terms of $\sigma_t(\cdot)$ as
	\begin{align}\label{eq:pi_sigma}
		\pi_t(\varphi) = \frac{\sigma_t(\varphi)}{\sigma_t(1)}.
	\end{align}
The connection to realization theory, as brought out by, e.g., \citet{Mitter_1979} or \citet{Brockett_1980}, is as follows: We can view \eqref{eq:Zakai} and \eqref{eq:pi_sigma} as a stochastic system in input/state/output form, where the observation process $W$ is the input, the unnormalized filter $\sigma$ is the (infinite-dimensional) state, and the filter $Y = \pi(\varphi)$ is the output.  The problem of existence of \textit{finite-dimensional} filters is precisely the problem of constructing finite-dimensional state space realizations of $Y$, as in the preceding example. }
 \end{example}

\subsection{Hijab's formulation as a special case}

In the work of \citet{Hijab_1983,Hijab_1983_CDC}, the driving process $W$ is a one-dimensional standard Brownian motion, and the process $Y$ is said to be \textit{It\^o-differentiable} if there exist two progressively measurable processes $\tilde{Z}_0,\tilde{Z}_1$, such that
\begin{align*}
	Y(t) = Y(0) + \int^t_0 \tilde{Z}_0(s)\dif s + \int^t_0 \tilde{Z}_1(s)\dif W(s)
\end{align*}
for all $t \in [0,T]$ almost surely, where the integral is understood in the sense of It\^o. These processes are a.s.\ uniquely determined by $Y$, which can be proved using the same argument as the one used to prove the a.s.\ uniqueness of $Z_0,\dots,Z_m$. Hijab then defines the linear operators $A$ and $B$ that send $Y \mapsto \tilde{Z}_0$ and $Y \mapsto \tilde{Z}_1$, respectively. While he does not give any operational characterization of $A,B$, it follows readily from the relation between the It\^o and the Stratonovich integrals that
\begin{align*}
	A = S_0 + \frac{1}{2}S^2_1, \qquad B = S_1
\end{align*}
(Hijab's $X_0$ and $X_1$ operators correspond to our definitions of $S_0,S_1$). Thus, we see that Hijab's notion of It\^o-differentiability and our notion of Dupire-differentiability coincide. In retrospect, it is easy to see why Hijab did not relate his construction to any explicit definition of a causal derivative: In order to properly define them, we need to consider c\`adl\`ag perturbations of the sample paths of $W$, which in turn requires the use of the Skorohod space $D([0,T],\Reals)$. By contrast, Hijab takes $C([0,T],\Reals)$ as the sample space.

\section{Realization theory for Dupire-smooth processes}

\Cref{ex:realization} from the preceding section provides a blueprint for a stochastic realization theory for Dupire-smooth processes that closely parallels the realization theory for deterministic systems based on Chen--Fliess functional expansions \citep[Ch.~3]{Isidori_1995}. In this section, we outline this approach, building on the ideas of Hijab; our treatment here is primarily formal, and we leave the detailed analysis of convergence, truncation errors, etc.\ for future work.

\begin{remark}{\em Chen--Fliess representations of stochastic processes have been considered by \citet{Sussmann_1988} for smooth functions of It\^o diffusion processes, by \citet{Litterer2014} for Dupire-differentiable functionals of diffusion processes, and by \citet{Dupire:2022co} in the general setting of functional It\^o calculus. However, none of these works are concerned with the questions of realization.}
\end{remark}

Let $Y$ be a Dupire-smooth process. Then, using the definitions of $S_\diamond,S_0,\dots,S_m$ and $c$, we can rewrite \eqref{eq:Dupire_diff_Y} in the following way:
\begin{align}\label{eq:Dupire_diff_Y_1}
	Y(t) = c(S_\diamond Y) + \sum^m_{i_1 = 0} \int^t_0 S_{i_1}Y(t_1) \circ \dif W_{i_1}(t_1)
\end{align}
(recall that $\dif W_0 = \dif t$). Since $S_{i_1}Y$ is continuously Dupire-differentiable, we have
\begin{align}\label{eq:Dupire_diff_Y_2}
	S_{i_1}Y(t_1) = c(S_{i_1}Y) + \sum^m_{i_2 = 0} \int^{t_1}_0 S_{(i_1,i_2)}Y(t_2) \circ \dif W_{i_2}(t_2),
\end{align}
where $S_{(i_1,i_2)} = S_{i_2}S_{i_1}$, cf.~\eqref{eq:S_word}. Substituting \eqref{eq:Dupire_diff_Y_2} into \eqref{eq:Dupire_diff_Y_1} gives
\begin{align*}
	Y(t) &= c(S_\diamond Y)  + \sum^m_{i_1 = 0} c(S_{i_1}Y) \int^t_0 \circ \dif W_{i_1}\\
	&  + \sum^m_{i_1,i_2 = 0}  \int^t_0 \int^{t_1}_0 S_{(i_1,i_2)}Y(t_2) \circ \dif W_{i_2}(t_2) \circ \dif W_{i_1}(t_1).
\end{align*}
Continuing inductively in this manner, we obtain the following formal infinite series expansion of the Chen--Fliess type:
\begin{align}\label{eq:CF}
	Y(t) = \sum_{\bi \in \sM} c(S_{\bi} Y)\int^t_0 \circ \dif W_{\bi}, \qquad 0 \le t \le T.
\end{align}
Observe that the coefficients $c(S_{\bi}Y)$ are deterministic constants (iterated Dupire derivatives of $Y$ at $0$), and all the randomness has been pushed into the iterated Stratonovich integrals of $W$.

We can now state the stochastic realization problem in the following way: Given a Dupire-smooth process $Y$, find an integer $n$, a point $x_0 \in \Reals^n$, $m+1$ smooth vector fields $g_0,\dots,g_m$ on $\Reals^n$, and a smooth function $h : \Reals^n \to \Reals$ defined on a neighborhood of $x_0$, such that, for every $\bi = (i_1,\dots,i_k) \in \sM$,
\begin{align}\label{eq:realization}
	c(S_{\bi}Y) = L_{g_{i_k}}L_{g_{i_{k-1}}} \dots L_{g_{i_1}} h(x_0). 
\end{align}
At this point, we can make use of the theory of formal power series, exactly as in the setting of deterministic realization theory \citep{Fliess_1981}. Let $\cZ = \{z_0,\ldots,z_m\}$ be a set of $m+1$ noncommuting indeterminates. With each word $\bi = (i_1,\dots,i_k) \in \sM$, we associate the formal monomial $z_{\bi} := z_{i_1}\dots z_{i_k}$; the empty word $\diamond$ will be associated with the constant term $z_\diamond = 1$. A \textit{formal power series} in $\cZ$ with real coefficients is an expression of the form
\begin{align*}
	R = \sum_{\bi \in \sM} R(\bi) z_{\bi},
\end{align*}
where $R(\bi)$ take real values. The set of all such formal power series, denoted by $\Reals\llangle\cZ\rrangle$, is a noncommutative associative $\Reals$-algebra, with $\alpha R+ \beta S$ and $RS$ defined for $R,S \in \Reals\llangle\cZ\rrangle$ and $\alpha,\beta \in \Reals$ by
\begin{align*}
(\alpha R+\beta S)(\bi) := \alpha R(\bi) + \beta S(\bi)
\end{align*}
and
\begin{align*}
	RS(\bi) := \sum_{\bi',\bi'' \in \sM \atop \bi = \bi'\bi''} R(\bi')S(\bi''),
\end{align*}
where $\bi'\bi''$ denotes the concatenation of $\bi' = (i'_1,\dots,i'_{k'})$ and $\bi''= (i''_1,\dots,i''_{k''})$:
\begin{align*}
	\bi'\bi'' = (i'_1,\dots,i'_{k'},i''_1,\dots,i''_{k''}).
\end{align*}
 A \textit{formal polynomial} is an element $P \in \Reals\llangle\cZ\rrangle$, for which $P(\bi) = 0$ for all but finitely many $\bi \in \sM$. The space of all formal polynomials, which is also an algebra, will be denoted by $\Reals\langle\cZ\rangle$. Moreover, it can be endowed with the structure of a \textit{Lie algebra} with the Lie bracket of two polynomials $P,Q \in \Reals\langle\cZ\rangle$ given by $[P,Q] := PQ - QP$. We will denote by $\cL(\cZ)$ the smallest subspace of $\Reals\langle\cZ\rangle$ that contains the monomials $z_0,\ldots,z_m$ and is closed under Lie bracketing with $z_0,\dots,z_m$. The elements of $\cL(\cZ)$ are called (formal) \textit{Lie polynomials}.

Now, following \citet{Hijab_1983}, we let $\cV_Y$ denote the smallest vector space of processes containing $Y$ and closed under all $S_{\bi}$. There is a natural $\Reals$-linear morphism $\mu : \Reals\langle\cZ\rangle \to \cV_Y$, defined by its action on monomials
\begin{align*}
	\mu : z_{\bi} \mapsto S_{\bi}Y, \qquad \bi \in \sM
\end{align*}
and extended to all of $\Reals\langle\cZ\rangle$ by linearity. Using this, we can associate to $Y$ a linear mapping $\sF_Y : \Reals\langle\cZ\rangle \to \Reals\llangle\cZ\rrangle$ defined by its action on monomials as
\begin{align*}
	\sF_Y(z_{\bi}) := \sum_{\bi' \in \sM} c \circ \mu(z_{\bi\bi'}) z_{\bi'}
\end{align*}
and, again, extended to all of $\Reals\langle\cZ\rangle$ by linearity. 

\begin{definition} The \textit{Hankel rank} of a Dupire-smooth process $Y$ is the rank of the mapping $\sF_Y$:
	\begin{align*}
		\rho_H(Y) := \dim \sF_Y(\Reals\langle\cZ\rangle).
	\end{align*}
The \textit{Lie rank} of $Y$ is the rank of the restriction of $\sF_Y$ to Lie polynomials:
\begin{align*}
	\rho_L(Y) := \dim \sF_Y(\cL(\cZ)).
\end{align*}
\end{definition}

The inequality $\rho_L(Y) \le \rho_H(Y)$ is immediate; moreover, in full analogy with the deterministic case, it is possible for the Lie rank $\rho_L(Y)$ to be finite and for the Hankel rank $\rho_H(Y)$ to be infinite. Again, the key ideas were already present in the work of Hijab in the special case of processes driven by Brownian motion, although Hijab only defined the Lie rank of a process. However, once the process $Y$ is represented using the Chen--Fliess series \eqref{eq:CF}, the machinery of formal power series can be applied in a unified manner. In particular, we have the following result, essentially due to \citet{Hijab_1983} (we give a sketch of the proof for completeness):

\begin{theorem} Let $Y$ admit a state space realization specified by $(n,x_0,g_0,\dots,g_m,h)$, i.e., $Y(t) = h(X(t))$ for all $t \in [0,T]$, where $(X(t))_{t \in [0,T]}$ is an $n$-dimensional continuous semimartingale that solves the Stratonovich integral equation
	\begin{align*}
		X(t) = x_0 + \int^t_0 g_0(X(s))\dif s + \sum^m_{i=1} \int^t_0 g_i(X(s)) \circ \dif W_i(s)
	\end{align*}
	for $0 \le t \le T$. Then $\rho_L(Y) \le n$.
\end{theorem}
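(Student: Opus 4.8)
The plan is to translate the Lie rank into differential‑geometric data attached to the realization and then bound it by the dimension of the state space. The starting point is \Cref{ex:realization}: since $Y(t)=h(X(t))$ with $X$ solving the given Stratonovich equation, for every word $\bi=(i_1,\dots,i_k)\in\sM$ we have $S_{\bi}Y(t)=L_{g_{i_k}}\cdots L_{g_{i_1}}h(X(t))$, so that, because $X(0)=x_0$, the Chen--Fliess coefficients are the iterated Lie derivatives $c(S_{\bi}Y)=L_{g_{i_k}}\cdots L_{g_{i_1}}h(x_0)$. Writing $L_{\bi}:=L_{g_{i_k}}\cdots L_{g_{i_1}}$, the assignment $z_{\bi}\mapsto L_{\bi}$ extends $\Reals$-linearly to a map into the differential operators near $x_0$ satisfying $L_{\bi\bi'}=L_{\bi'}L_{\bi}$, which mirrors the relation $S_{\bi\bi'}=S_{\bi'}S_{\bi}$ used earlier. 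Unwinding the definition of $\sF_Y$ then gives, for any $P\in\Reals\langle\cZ\rangle$,
\begin{align*}
	\sF_Y(P)=\sum_{\bi'\in\sM}\big(L_{\bi'}L_P\,h\big)(x_0)\,z_{\bi'},\qquad L_P:=\sum_{\bi\in\sM}P(\bi)L_{\bi},
\end{align*}
so $\sF_Y(P)$ is exactly the generating series of the realization with the same data $(n,x_0,g_0,\dots,g_m)$ but with output $L_Ph$ in place of $h$.

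The point of restricting to Lie polynomials now becomes visible. If $P\in\cL(\cZ)$, then, using $[L_{g_i},L_{g_j}]=L_{[g_i,g_j]}$, the operator $L_P$ is \emph{first order}: one has $L_P=L_{g_P}$ for a smooth vector field $g_P$, where $P\mapsto g_P$ is the Lie‑algebra morphism $\cL(\cZ)\to\mathrm{Vec}(\Reals^n)$ (smooth vector fields near $x_0$) determined by $z_i\mapsto g_i$ (up to the sign coming from product reversal), with image $\mathrm{Lie}\{g_0,\dots,g_m\}$. Thus $\sF_Y$ restricted to $\cL(\cZ)$ factors through $P\mapsto g_P$, and each $\sF_Y(P)$ is the generating series of the single scalar function $L_{g_P}h$ read along the trajectories of $X$. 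For a general $P$ the operator $L_P$ has order equal to the length of $P$; this is precisely why $\rho_H(Y)$ can be infinite while $\rho_L(Y)$ stays finite, since only for first‑order $L_P$ do we obtain one output function whose first‑order data at $x_0$ live in the finite‑dimensional space $T_{x_0}\Reals^n$.

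To finish I would bound $\dim\sF_Y(\cL(\cZ))$ by $n=\dim\Reals^n$. The natural target is to factor $P\mapsto\sF_Y(P)$ through the value of $g_P$ at $x_0$, i.e.\ through the tangent vector $g_P(x_0)\in T_{x_0}\Reals^n$ (equivalently, through $d(L_{g_P}h)(x_0)\in T^*_{x_0}\Reals^n$), which would yield $\rho_L(Y)\le n$ at once. This last step is where I expect the real difficulty to sit: the coefficient $(L_{\bi'}L_{g_P}h)(x_0)$ depends \emph{a priori} on the full jet of $g_P$ at $x_0$, not merely on its value there, because each factor of $L_{\bi'}$ differentiates the coefficients of $g_P$, so the naive factorization fails. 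The crux---and the essential content of the realization theorem of \citet{Hijab_1983} and its deterministic ancestor \citep{Fliess_1981}---is to show that, once the static part $c\circ\mu$ is separated off, this higher‑jet dependence does not enlarge the image, so that the rank is governed by the $n$‑dimensional first‑order data at $x_0$. I would argue this by exploiting that every vector field in play is generated by the single finite family $g_0,\dots,g_m$ on the $n$‑dimensional state manifold, so that all the numbers $(L_{\bi'}L_{g_P}h)(x_0)$ are readouts of a motion confined to the (at most $n$‑dimensional) orbit of this family through $x_0$; the remaining work is the bookkeeping that converts this geometric confinement into the linear‑algebraic bound $\dim\sF_Y(\cL(\cZ))\le n$.
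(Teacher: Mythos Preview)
Your setup is correct and matches the paper's: from \Cref{ex:realization} you extract $c(S_{\bi}Y)=L_{g_{i_k}}\cdots L_{g_{i_1}}h(x_0)$, and for $P\in\cL(\cZ)$ you correctly reduce the operator $L_P$ to a first-order $L_{g_P}$ with $g_P$ in the Lie algebra $\cG$ generated by $g_0,\dots,g_m$. But the proof stops short of the bound: your last paragraph explicitly defers the ``bookkeeping,'' and that bookkeeping is the whole theorem. The factorization you propose, $P\mapsto g_P(x_0)\in T_{x_0}\Reals^n$, does not work (as you yourself note), because with $L_{g_P}$ placed innermost the coefficients $(L_{\bi'}L_{g_P}h)(x_0)$ see the full jet of $g_P$ at $x_0$; the appeal to ``orbit confinement'' is a heuristic, not an argument, and nothing in your text converts it into the inequality $\dim\sF_Y(\cL(\cZ))\le n$.

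The paper supplies the missing step by introducing the observation space $\cO_h$ (the smallest subspace of smooth functions containing $h$ and closed under the $L_{g_i}$) and the annihilator
\[
\cG_0:=\{v\in\cG:L_v\varphi(x_0)=0\text{ for all }\varphi\in\cO_h\}.
\]
In the paper's formula $L_{\nu(P)}$ sits \emph{outermost}, so $\nu(P)\in\cG_0$ forces every coefficient $L_{\nu(P)}\big(L_{g_{i_k}}\cdots L_{g_{i_1}}h\big)(x_0)$ to vanish, hence $\sF_Y(P)=0$; this gives $\nu^{-1}(\cG_0)\subseteq\ker(\sF_Y|_{\cL(\cZ)})$, and since $\nu$ is onto $\cG$, rank--nullity yields $\rho_L(Y)\le\dim(\cG/\cG_0)$. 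The bound by $n$ then follows from the elementary fact that $v(x_0)=0$ implies $L_v\varphi(x_0)=\langle d\varphi(x_0),v(x_0)\rangle=0$ for every $\varphi$, so the kernel of the evaluation $v\mapsto v(x_0)$ is contained in $\cG_0$ and $\dim(\cG/\cG_0)\le\dim T_{x_0}\Reals^n=n$. The idea you are missing is precisely this intermediate quotient: one does not push $\sF_Y$ directly to $T_{x_0}\Reals^n$, but first to $\cG/\cG_0$, which absorbs the higher-jet dependence. You should also reconcile the placement of $L_{g_P}$ in your formula with the paper's, since the clean $\cG_0$ argument relies on the outermost position.
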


\begin{proof} Let $\cG$ be the Lie algebra of vector fields generated by $g_0,\ldots,g_m$, and let $\cO_h$ be the smallest subspace of smooth functions $\Reals^n \to \Reals$ containing $h$ and closed under the Lie differentiation along the vector fields $g_0,\ldots,g_m$. Let $\cG_0$ be the set of all $v \in \cG$, such that
	\begin{align*}
		L_v\varphi(x_0) = 0, \qquad \text{for all } \varphi \in \cO_h.
	\end{align*}
Since $\cL(\cZ)$ is a free Lie algebra generated by $m+1$ elements, there is a natural Lie algebra homomorphism $\nu : \cL(\cZ) \to \cG$ that sends $z_i \mapsto g_i$ for $i = 0,\dots,m$. Then, referring to \Cref{ex:realization}, it is easy to check that, for any Lie polynomial $P \in \cL(\cZ)$, we have
	\begin{align*}
		\sF_Y(P) = \sum^\infty_{k=0}\sum_{\bi = (i_1,\dots,i_k)} L_{\nu(P)}L_{g_{i_k}}\dots L_{g_{i_1}}h(x_0) z_{\bi}.
	\end{align*}
	Thus, if $\nu(P) \in \cG_0$, then $\sF_Y(P) = 0$, so that $\nu^{-1}(\cG_0) = \cL_0 := \cL(\cZ) \cap \ker \sF_Y$. Consequently, the quotient spaces $\cL(\cZ)/\cL_0$ and $\cG/\cG_0$ are isomorphic, so by the rank-nullity theorem we have
	\begin{align*}
		\rho_L(Y) = \dim(\cL(\cZ)/\cL_0) = \dim(\cG/\cG_0) \le n,
	\end{align*}
	where the last step follows from the definition of $\cG_0$.
\end{proof}
\begin{remark} {\em If we associate to the tuple $(n,x_0,g_0,\dots,g_m,h)$ a deterministic control-affine system
	\begin{align*}
		\dot{x} &= g_0(x) + \sum^m_{i=1}g_i(x)u_i, \,\, \, x(0) = x_0 \\
		y &= h(x)
	\end{align*}
	then $\cG$ and $\cO_h$ are, respectively, its \textit{control Lie algebra} and  \textit{observation} space \citep[Ch.~2]{Isidori_1995}. This correspondence was noted by \citet{Hijab_1983_CDC}, who also considered the analogue of minimal realizations in the context of stochastic systems.}
\end{remark}

An equivalent representation of the mapping $\sF_Y$ associated to a Dupire-smooth process $Y$ is given by the (infinite) \textit{Hankel matrix} $\sH_Y$ with entries indexed by  words in $\sM$:
\begin{align*}
	\sH_Y(\bi,\bi') := c \circ \mu(z_{\bi\bi'}) = c(S_{\bi\bi'}Y), \qquad \bi,\bi' \in \sM.
\end{align*}
The Hankel rank $\rho_H(Y)$ is then the rank of the Hankel matrix. Thus, for the linear filtering situation considered in \Cref{ex:linear_filter}, the only nonzero entries of the Hankel matrix are given by
\begin{align*}
	\sH_Y(\bi,\bi') =
	\sum^m_{j=1} h_j^{(k)}(0)
\end{align*}
if $\bi$ and $\bi'$ are both strings of $0$'s and their concatenation has length $k \ge 0$, and
\begin{align*}
	\sH_Y(\bi,\bi') = h_i^{(k)}(0)
\end{align*}
if $\bi\bi' = (0,\dots,0,i)$ consisting of $k$ $0$'s followed by a single $i \in \{1,\dots,m\}$. It then follows from the classical realization theory of linear time-invariant systems that
$$
\rho_H(Y) = {\rm rank}\,  \sH_Y \le n
$$
if and only if there exist matrices $C \in \Reals^{1 \times n}$, $A \in \Reals^{n \times n}$, $B \in \Reals^{n \times m}$, such that
\begin{align*}
	h(t) = \big(h_1(t),\dots,h_m(t)\big) = Ce^{At}B,
\end{align*}
which gives rise to a linear state space realization
\begin{align*}
	X(t) &= \int^t_0 AX(s)\dif s + \int^t_0 B\dif W,  \\
	Y(t) &= CX(t).
\end{align*}

We can now proceed to address the question of nonlinear realization posed in the beginning of this section. Given  a Dupire-smooth process $Y$ with finite Hankel rank $\rho_H(Y) = n$, we can follow the approach of Fliess (cf., e.g., Theorem~3.4.3 in \citet{Isidori_1995}) to construct a \textit{bilinear}  realization, i.e., a tuple $(n,x_0,A_0,\dots,A_m,C)$ with $x_0 \in \Reals^n$, $A_0,\dots,A_m \in \Reals^{n \times n}$, and $C \in \Reals^{1 \times n}$, such that
\begin{align*}
	c(S_{\bi}Y) = C A_{i_k} \dots A_{i_1} x_0, \quad \text{for all } \bi = (i_1,\dots,i_k) \in \sM.
\end{align*}
For a process $Y$ with finite Lie rank $\rho_L(Y) = n$, we would need a convergence condition of the form
$$
c(S_{\bi}Y) \le Ck!r^{k}, \qquad \text{for all }\bi = (i_1,\ldots,i_k) \in \sM
$$
for some constants $C,r > 0$. Then, just as in \citet{Reutenauer_1986}, we could establish the existence of a tuple $(n,x_0,g_0,\dots,g_m,h)$, where $g_0,\ldots,g_m$ are analytic vector fields and $h$ is an analytic real-valued function on some neighborhood of $x_0$, such that \eqref{eq:realization} holds. However, in order to deduce from the above results the corresponding probabilistic constructions, i.e., either a bilinear state space realization of the form
\begin{align*}
	X(t) &= x_0 + \sum^m_{i=1} \int^t_0 A_i X(s) \circ \dif W_i (s), \\
	Y(t) &= CX(t)
\end{align*}
when $\rho_H(Y) = n$, or a nonlinear analytic one of the type discussed in \Cref{ex:realization}, we would need to address the questions of convergence of the Chen--Fliess series \eqref{eq:CF}, either in a suitable $L^p$ sense, as in \citet{Sussmann_1988} or \citet{Litterer2014}, or pathwise in the sense of \citet{Follmer_1981}, as in \citet{Dupire:2022co}. These questions can be rather delicate (see, e.g., the discussion in \citet{Sussmann_1976} concerning noise-like ``generalized inputs'' in the context of bilinear systems), and we leave them for future work.

\subsection{Relation to Wiener and Volterra series}

Earlier works have indicated the possibility of using other expansions for nonlinear systems, such as Wiener or Volterra series, for the purpose of constructing nonlinear state space realizations of stochastic systems. For instance, \citet{Brockett_1980} points out that, in some cases, the input-output map $W \mapsto Y$ in \Cref{ex:filtering} may have a (stochastic) Volterra series representation of the form
\begin{align*}
	Y(t) & = k_0(t) + \int^t_0 k_1(t,t_1) \dif W(t_1) \\
	&\qquad \qquad  + \int^t_0 \int^{t_1}_0 k_2(t,t_1,t_2) \dif W(t_2) \dif W(t_1) + \dots
\end{align*}
where the iterated integrals are in the sense of It\^o, and that a necessary condition for the existence of a finite-dimensional nonlinear state space realization is that the Volterra kernels  $k_0(\cdot),k_1(\cdot,\cdot),k_2(\cdot,\cdot,\cdot),\dots$ satisfy a certain separability condition \citep{Brockett_1975}.  \citet{Lindquist_1982} pursue a similar route and work with zero-mean stationary processes that admit a Wiener expansion
\begin{align*}
	Y(t) &= \int^t_0 h_1(t-t_1) \dif W(t_1) \\
	& \qquad  + \int^t_0 \int^{t_1}_0 h_2(t-t_1,t-t_2) \dif W(t_2) \dif W(t_1) + \dots,
\end{align*}
where $W$ is a one-dimensional standard Brownian motion (the `innovation process' for $Y$). However, as shown by \citet{Dupire:2022co} (cf.~also \citet{Stroock_1987}), the Volterra or Wiener kernels in these expansions are given by expected values of the iterated Malliavin derivatives of $Y(t)$, whereas the coefficients in the Chen--Fliess series \eqref{eq:CF} are given by the iterated Dupire derivatives of $Y$ at $t=0$.

\section{Conclusion}

Building on the pioneering work of \citet{Hijab_1983,Hijab_1983_CDC}, we have presented an approach to nonlinear stochastic realization for a class of stochastic processes that can be represented as the outputs of a causal system driven by a continuous semimartingale. The key concept here is that of causal derivative of a process, originating in the functional It\^o calculus of \citet{Dupire2009FunctionalIC}. We have shown that, at least formally, the questions of existence of finite-dimensional state space realizations can be phrased in terms of the stochastic analogues of the Lie and the Hankel rank from the realization theory for deterministic systems following the ideas of \citet{Fliess_1981}. There are several interesting directions for further research, such as the issues of convergence and an extension to processes driven by general c\`adl\`ag semimartingales (such as sample paths of counting processes).

\section*{Acknowledgments}
This work was supported in part by the NSF under award CCF-2106358
(``Analysis and Geometry of Neural Dynamical Systems'') and in part by
the Illinois Institute for Data Science and Dynamical Systems (iDS${}^2$), an
NSF HDR TRIPODS institute, under award CCF-1934986.

\bibliographystyle{plainnat}
\bibliography{mtns2024_realization.bbl}             

\end{document}